\theoremstyle{definition}
\newtheorem{theorem}{Theorem}[section]
\newtheorem{lemma}{Lemma}[section]
\newtheorem{definition}{Definition}[section]
\newtheorem{remark}{Remark}[section]
\title{A Model for the Multi-Virus Contact Process}
\author[Xu Huang]{Xu Huang}
\address{Xu Huang: Dept. of Mathematics
\\University of Rochester
\\Rochester, NY  14627}
\email{xhuang46@u.rochester.edu}
\date{\today}
\begin{document}

\keywords{contact process, interacting particle system, zero range process, phase transitions}

\begin{abstract}
    We study one specific version of the contact process on a graph. Here, we allow multiple infections carried by the nodes and include a probability of removing nodes in a graph. The removal probability is purely determined by the number of infections the node carries at the moment when it gets another infection. In this paper, we show that on any finite graph, any positive value of infection rate $\lambda$ will result in the death of the process almost surely. In the case of $d$-regular infinite trees, We also give a lower bound on the infection rate in order for the process to survive, and an upper bound for the process to die out.
\end{abstract}

\subjclass[2020]{Primary, 60K35; Secondary, 82C22}
%Interacting random processes; statistical mechanics type models; percolation theory

%Interacting particle systems in time-dependent statistical mechanics

\maketitle
\section{Introduction}

\maketitle
\subsection{Background}
The contact process, first introduced by T. E. Harris in \cite{T.E.Harris}, is a model describing the spread of infections on a graph. A general description of this model can be found in \cite{grimmett2018probability}. There are many beautiful papers in this area, such as \cite{nam2022critical} and \cite{durrett1988lecture}. Because of the COVID-19, people are interested in getting meaningful results regarding the pandemic through the methodology of math modeling, and papers like \cite{dauvergne2022sir} and \cite{candellero2021first} give us some examples of fascinating variants of contact processes. In this paper, instead of working with the traditional SIS (Susceptible - Infected - Susceptible) contact process, we reformulate the process by introducing a probability of death for each node. In our process, there are three statuses for each node once the virus appears in the graph: the node can get infected and affect others, it can be healthy, or it can be removed from the graph. Our model is called the Multi-Virus Contact Process (MVCP) with death rate acting on each node of the graph. Here is the verbal description of the model: 
\begin{enumerate}

\item Each vertex in the graph is allowed to carry multiple infections at any given moment.
\item Each infection passes to its host's neighbors independently following a Poisson process with infection rate $\lambda$.
\item Each infection is healed following a Poisson process with rate 1.
\item Each infection or recovery happens independently in the process.
\item Each infection has a probability of killing its host at the moment it passes to the host. When the infection passes to the host, suppose the host is already carrying $i$ infections simultaneously, the probability of killing the host is $\phi(i+1)$, where $i \in \mathbb{N}$. $\phi$ is the function which maps the number of infections the host is carrying to the probability of death, at the moment the infection passes to the host. 
\item When a node dies, the node itself, all the infections on it, and all the edges connecting to it are removed from the graph.
\end{enumerate}

There are two main differences between the classical contact process and our new model. The first is that we allow any node to carry multiple infections, and the second is that we add the probability of removing nodes and edges from the graph by including the probability of death. 

Also, due to the fact that the probability of death only depends on the current number of viruses on the node, our model is related to the zero-range process. First introduced by Frank Spitzer in \cite{SPITZER1970246}, the zero-range process (ZRP) is a type of stochastic interacting particle system in which indistinguishable particles can relocate from site to site, with rates depending on the occupancy of the departure sites. Then one can view the viruses in the multi-virus contact process model as particles giving birth, dying, and moving among the nodes. The death of any node corresponds to the removal of a site from the zero-range process. The probability of removing such a site is purely determined by the number of particles (or viruses) on it. This paper does not use methods from ZRP, but the reader interested in such methods can consult \cite{10.1214/aop/1176996977} and \cite{liggett1985interacting}. 

Despite the fact that contact processes have been investigated extensively in the past decades, we are interested in this particular variant of contact process because we believe this model can be useful to describe the situation in a variety of fields, such as social science and epidemiology. For instance, if we treat rumor as the virus, so that each person in a social circle, which is represented by the node in the graph, can receive the rumor (gets infected) or forget (heals) it. The person always hears the rumor from the people around him/her, and he/she also has the ability to pass the rumor to others, represented by the spread of virus in the model. Moreover, if the person has heard enough of the rumor, he/she can choose to leave the social circle by cutting out the connections with others in the circle, which is represented by the removal of the node in the graph. The model can also describe the word of mouth in social media, or the spread of bacteria in a community, and we will omit the explanation.

Note the function of death $\phi$ only depends on the number of infections a node currently carries, without taking time as another parameter. This setup indicates that we assume the illness of a node only worsen at the moment the node gets a new infection. The reason of not considering time in the function is that we assume the patient is diagnosed as soon as he/she is infected, and that the patient is going to heal from the infections if he/she is diagnosed to live by the doctor, represented by the function $\phi$.

\maketitle
\subsection{Mathematical Formulation}
Here is the mathematical formulation of our contact model. On a given graph $G_0 = (V_0, E_0)$  with vertex set $V_0$ and edge set $E_0$ at the moment $t=0$, our contact process $(\xi_t)_{t \geq 0}$ is a continuous Markov process with infection rate $\lambda$ and recovery rate 1 on the state space $\{\mathbb{N} \cup \emptyset\}^{V_0}$, where $\mathbb{N}$, from now on, is the set of natural numbers including 0. Here, on a given site $x$, we write $\xi_t (x) = 0$ if the node is healthy, $\xi_t (x) = i$ if it carries $i$ infections and $\xi_t (x) = \emptyset$ if it dies or is already dead, at moment $t$. If $\xi_t (x) = \emptyset \text{ or } 0\quad \forall x \in V_0$, then of course there do not exist any infected nodes on the graph at the moment $t$ anymore, and we denote this case by $\xi_t = \emptyset$. Therefore, assuming that the state of the contact process is $\zeta$ at time $t$, and $\zeta\ne\emptyset$, then we have 
\begin{equation*}
\begin{split}
& \mathbb{P} (\xi_{t+h} (x) = \zeta(x) -1 \mid \xi_{t} = \zeta) =  \zeta(x)h + o(h)  \qquad  \qquad \qquad  \text{if} \hspace{0.1cm} \zeta (x) \ne 0  \hspace{0.1cm} \text{or} \hspace{0.1cm}  \emptyset \\
&\mathbb{P} (\xi_{t+h} (x) = \zeta(x)+1 \mid \xi_{t} = \zeta) = (\lambda N^t_{\zeta}(x) h + o(h))(1-\phi(\zeta(x)+1)) \hspace{0.45cm}  \text{and}    \\
&\mathbb{P} (\xi_{t+h} (x) = \emptyset \mid \xi_{t} = \zeta) = (\lambda N^t_{\zeta}(x) h + o(h))\phi(\zeta(x)+1)  \hspace{0.37cm}\quad \quad \quad  \text{if} \hspace{0.1cm} \zeta (x) \ne  \emptyset 
\end{split}
\end{equation*} 
as $h \downarrow 0$. $N^t_{\zeta}(x)$ is the number of infections which the neighbours of $x$ have in $\xi$ at moment $t$,

\begin{equation*}
N^t_{\zeta}(x) = \sum_{y \in \space V_0, y \sim_t x} {\zeta(y)},
\end{equation*} 
where $ x \sim_t y$ means that there still exists an edge connecting $x$ and $y$ at moment $t$. $\phi: \mathbb{N} \to [0, 1]$ is the function describing the probability that an infection kills the specific particle at the moment when it gets infected, with respect to the number of infections the particle has at that moment. Here, we assume that the more infections a node carries simultaneously, the more likely the node is going to die, so $\phi$ is assumed to be non-decreasing, and we assume the function has the property that
\begin{align*}
\phi(0) = 0, \hspace{0.1cm} \phi(k) = 1 \text{ for all } k\geq M,
\end{align*}
% && \bb
where $M\in \mathbb{N}$ is the constant representing the maximum total number of infections that a node can carry at any moment.

Therefore, each virus is healed at rate 1, infects any neighbor at rate $\lambda$ independently, and kills a node following a function of number of infections the node currently carries at the moment when the virus arrives at the node.

We also define the death and survival of the process as following: 
\begin{definition}
The multi-virus contact process is said to \textbf{die out} if 
    \begin{equation}
     P_{\lambda} (\xi_t \ne \emptyset \hspace{0.2cm} \forall t) = 0,
\end{equation}
and to \textbf{survive} if 
\begin{equation}
    P_{\lambda} (\xi_t \ne \emptyset \hspace{0.2cm} \forall t) > 0.
\end{equation}
\end{definition}

By this definition, it is clear that with any particular value of $\lambda$ the process can either die out or survive, but not both. 

We also give definition of phase transition for the multi-virus contact process.

\begin{definition}
The MVCP contact process is said to have \textbf{phase transition} if there exists a critical value $\lambda_c > 0$ such that the process will die out if the infection rate of process is smaller than $\lambda_c$ and will survive if the infection rate is larger than $\lambda_c$.
\end{definition}

\maketitle
\section{Finite Scenario}
We start by proving that in any given finite graph, our contact process dies out almost surely.

\begin{theorem}
\label{th: 1}
Assume the contact process starts on a finite graph $G_0 = (V_0, E_0)$, where $|V_0| < \infty$. Then we have
\begin{equation}
    P_{\lambda} (\xi_t \ne \emptyset \hspace{0.15cm} \forall t) = 0 \quad \forall \lambda > 0.
\end{equation}
\end{theorem}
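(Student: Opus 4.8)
The plan is to turn the informal description into a finite-state continuous-time Markov chain and then show that from every configuration there is a uniformly positive chance of "healing everything before anything else happens," so that the process hits the (absorbing) infection-free set almost surely.

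First I would observe that the assumption $\phi(k)=1$ for $k\ge M$ makes the state space finite. Indeed, a living vertex can never carry more than $M-1$ infections: if a vertex already carrying $M-1$ infections receives a new one, it would reach $M$, but $\phi(M)=1$, so it is removed with probability $1$ at that instant. Hence $(\xi_t)$ takes values in the finite set $\{\emptyset,0,1,\dots,M-1\}^{V_0}$ (the initial datum being of this form, or reduced to it at the first jump), the total number of infections present is always at most $N_{\max}:=(M-1)|V_0|$, and $(\xi_t)$ is a continuous-time Markov chain with bounded jump rates, hence non-explosive. Let $A=\{\zeta:\zeta(x)\in\{0,\emptyset\}\ \forall x\}$ be the set of configurations carrying no infection; by the transition rules a recovery needs a node with positive count and an infection needs an infected neighbour, so no transition leaves $A$, i.e. $A$ is absorbing, and in the paper's notation $\{\xi_t=\emptyset\}=\{\xi_t\in A\}$.

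Next I would extract a uniform lower bound on the chance of absorption. From a configuration $\zeta\notin A$ with total infection count $n=n(\zeta)\ge 1$ and total outgoing rate $R(\zeta)$ (finite, and $R(\zeta)\ge n\ge 1$), the embedded jump chain makes a "recovery" move — one of the $n$ present infections heals, with no new infection and no death — as its next jump with probability $n/R(\zeta)$. Since there are only finitely many configurations, $\epsilon:=\min_{\zeta\notin A}n(\zeta)/R(\zeta)>0$. Performing $n(\zeta)\le N_{\max}$ consecutive recovery moves drives the count down by one each time, with every intermediate configuration outside $A$ until the count reaches $0$, at which point the chain is in $A$; this happens with probability at least $\epsilon^{N_{\max}}=:\delta>0$, uniformly in the starting configuration.

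Finally I would conclude by a standard blocking argument: group the jumps of the embedded chain into blocks of $N_{\max}$ jumps; conditioned on starting a block outside $A$, the chain enters $A$ before the block ends with probability at least $\delta$, so the probability of surviving $m$ blocks outside $A$ is at most $(1-\delta)^m\to 0$. Hence the embedded chain is absorbed into $A$ after finitely many jumps almost surely, and since the continuous-time chain does not explode, $\xi_t=\emptyset$ for all large $t$ almost surely, giving $P_\lambda(\xi_t\ne\emptyset\ \forall t)=0$ for every $\lambda>0$. I do not anticipate a genuine obstacle; the only points needing care are (i) deriving finiteness of the state space from $\phi(M)=1$, and (ii) observing that the uniform bound $\delta>0$ really uses $|V_0|<\infty$ — which is exactly why the argument must, and does, fail on infinite trees.
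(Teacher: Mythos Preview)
Your argument is correct, but it follows a genuinely different route from the paper's. The paper does not pass through the finite-state Markov chain absorption machinery at all: instead it first proves a node-wise lemma (valid on \emph{any} graph, finite or infinite) that no vertex can be ``immortal'', i.e.\ receive infinitely many infections. The reason is that each new infection kills the host with probability at least $\phi(1)$, so surviving $N$ infections has probability at most $(1-\phi(1))^{N}\to 0$. From this lemma the paper concludes that each $v_i\in V_0$ has, almost surely, a finite time $t_i$ after which it is either dead or never infected again; finiteness of $V_0$ then lets one take $T=\max_i t_i$.

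By contrast, you exploit $\phi(M)=1$ and $|V_0|<\infty$ to get a \emph{finite} state space, and then run the standard ``uniform positive chance of reaching the absorbing set within $N_{\max}$ jumps'' argument. This is more elementary and, notably, does not need $\phi(1)>0$; the paper's lemma silently uses that inequality when writing $N_\epsilon=\lceil\log\epsilon/\log(1-\phi(1))+1\rceil$. On the other hand, the paper's immortality lemma is a statement about a single vertex and therefore survives unchanged on infinite graphs, which is conceptually useful later even though the finite-graph theorem itself cannot. Your care points (i) and (ii) are exactly the right ones, and both are handled correctly.
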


\begin{remark} Theorem \ref{th: 1} shows a similarity between the MVCP contact process with the classical SIS contact process, that both processes have \textbf{no} phase transition in the finite setting. 
\end{remark}

Before proving this theorem, we first define a helpful concept and then prove an essential lemma. 

\begin{definition}
 Any node $\hat{v}$ in $G_0$ is said to have the immortal property, or to be immortal, if $\forall N \in \mathbb{N}$,  $\exists t' \in [0, \infty)$ such that the total number of infections that $\hat{v}$ experienced before time $t'$ is larger than $N$. 
\end{definition}

\begin{remark}
    It is clear that a node being immortal is a random event, since at time t = 0 one might not know if a node turns out to be immortal.
\end{remark}

\begin{lemma}
\label{le:1}
    No node can have immortal property in any graph $G_0$ almost surely.
\end{lemma}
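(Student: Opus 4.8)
The plan is to reduce the lemma to a uniform quantitative estimate and then iterate it. Two structural remarks frame everything. First, since $\phi(k)=1$ for $k\ge M$, a living node never carries more than $M-1$ infections, so if $\hat v$ ever receives $M$ infections in immediate succession with no intervening recovery, it is pushed up through level $M$ and dies (with probability one at the latest such step, as $\phi(M)=1$). Second, $\hat v$ is immortal exactly when it receives infinitely many infections over $[0,\infty)$ and is never killed; and because infections reach $\hat v$ at rate $\lambda N^t_\zeta(\hat v)\le \lambda(M-1)\deg(\hat v)<\infty$ on a locally finite graph (which covers the finite graphs of this section and the $d$-regular trees treated later), the arrival times $\tau_1<\tau_2<\cdots$ of infections at $\hat v$ do not accumulate, so on the immortal event $\tau_k\to\infty$.

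The crux is the estimate: there is a constant $\varepsilon>0$ such that, for every stopping time $\tau$ that is an arrival time of an infection at $\hat v$,
\[
\mathbb P\bigl(\hat v\ \text{is dead by time}\ \tau+1 \,\big|\, \mathcal F_\tau\bigr)\ \ge\ \varepsilon .
\]
To prove this I would work in the Harris graphical representation, in which each node carries an independent Poisson recovery mechanism of total rate at most $M-1$ and each directed edge carries an independent rate-$\lambda$ transmission clock. The arrival at $\tau$ comes from some neighbour $y$ with $\xi_\tau(y)\ge1$; consider the event $F$ that, on the interval $[\tau,\tau+1]$, the recovery clocks of $\hat v$ and of $y$ have no points, every transmission clock directed into $y$ has no points, and the transmission clock $y\to\hat v$ has at least $M$ points. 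These four requirements involve pairwise disjoint primitive Poisson clocks, so by the strong Markov property $\mathbb P(F\mid\mathcal F_\tau)=e^{-(M-1)}\cdot e^{-(M-1)}\cdot e^{-\lambda\deg(y)}\cdot \mathbb P(\mathrm{Poisson}(\lambda)\ge M)$, a positive constant that is uniform over $\tau$ because the finitely many neighbours of a fixed $\hat v$ have bounded degree. On $F$ the node $y$ neither recovers, nor gains a token, nor dies, hence keeps at least one infection throughout $[\tau,\tau+1]$; so each of the $\ge M$ rings of $y\to\hat v$ delivers an infection to $\hat v$, which, having no recovery on $[\tau,\tau+1]$, is driven up through level $M$ and dies before time $\tau+1$. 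Taking $\varepsilon$ to be the displayed constant proves the estimate.

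Given the estimate, define stopping times by letting $\sigma_1$ be the first arrival at $\hat v$ and, recursively, $\sigma_{i+1}$ the first arrival strictly after $\sigma_i+1$. On the immortal event every $\sigma_i$ is finite, since arrivals occur infinitely often and $\tau_k\to\infty$, and $\hat v$ is alive at each $\sigma_i$. Because a dead node stays dead and $\sigma_{i+1}>\sigma_i+1$, the estimate applied at the arrival time $\sigma_i$ together with the strong Markov property gives $\mathbb P(\hat v\ \text{alive at}\ \sigma_{i+1}\mid\mathcal F_{\sigma_i})\le 1-\varepsilon$ on $\{\hat v\ \text{alive at}\ \sigma_i\}$, so by induction $\mathbb P(\hat v\ \text{alive at}\ \sigma_n)\le(1-\varepsilon)^n\to0$. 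Since immortality forces $\hat v$ to be alive at every $\sigma_n$, the event that $\hat v$ is immortal has probability zero.

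I expect the main obstacle to be the estimate above, and specifically the care needed to build a \emph{self-contained} favourable event out of disjoint primitive Poisson clocks: requiring the raw clocks into $y$ to be silent (rather than merely that ``$y$'s configuration is unchanged'') is what decouples $F$ from the feedback of $\hat v$ on $y$, and using a single bounded-rate recovery clock per node avoids having to track the recovery clocks of tokens created during the burst. The remaining ingredients — non-explosion of the arrival process on a locally finite graph, and the measurability bookkeeping in iterating along the random times $\sigma_i$ — are routine.
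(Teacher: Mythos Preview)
Your argument is sound, but it is substantially more elaborate than what the paper does. The paper's proof is a two-line estimate: since $\phi$ is non-decreasing, every time $\hat v$ receives an infection it dies at that instant with probability at least $\phi(1)$, so the probability that $\hat v$ survives its first $N$ arrivals is at most $(1-\phi(1))^N$; taking $N_\epsilon=\lceil\log\epsilon/\log(1-\phi(1))\rceil+1$ makes this smaller than any given $\epsilon$, hence the probability of surviving infinitely many arrivals is zero. No graphical construction, no burst event, no strong Markov iteration along spaced stopping times.

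The trade-off is that the paper's argument silently requires $\phi(1)>0$ (otherwise $(1-\phi(1))^N\equiv 1$ and the displayed choice of $N_\epsilon$ divides by zero), a hypothesis the model description does not impose. Your route instead exploits only $\phi(M)=1$: after each arrival you manufacture, with uniformly positive conditional probability, a run of $M$ further hits with no intervening recovery that drives $\hat v$ to the lethal level. So your proof is more laborious but covers the degenerate case $\phi(1)=0$. One cosmetic correction: in this model each \emph{infection} transmits at rate $\lambda$, so the raw transmission intensity along a directed edge is $\lambda$ times the source occupancy, bounded by $\lambda(M-1)$; your factor $e^{-\lambda\deg(y)}$ for ``no arrivals into $y$'' should accordingly be $e^{-\lambda(M-1)\deg(y)}$ (and the $y\to\hat v$ rings must be paired with the thinning coins, or, cleaner, attached to one fixed token on $y$). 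This does not affect the argument, since only positivity of $\varepsilon$ is used.
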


\begin{proof}
Let $\epsilon > 0$ be given. Choose $N_\epsilon = \bigg \lceil \frac { \log \epsilon}{ \log(1-\phi(1))} + 1 \bigg \rceil$. Choose an arbitrary vertex $\hat{v}$ from $V_0$, then for $\hat{v}$, it has three possible situations: it may never get $N_\epsilon$ infections and survive throughout the time, it may die before or when getting $N_\epsilon$ infections, or it may survive at all times after the $N_\epsilon$ infections at site $\hat{v}$ has occurred. Moreover, the probability for the node to survive after getting $N_\epsilon$ infections is
\begin{equation}
\label{neweq 1}
    \mathbb{P} ( \xi(\hat{v}) \ne \emptyset) = \prod_{i=1}^{N_\epsilon} (1-\phi(k_i)) \leq \prod_{i=1}^{N_\epsilon} (1-\phi(1)) = (1-\phi(1))^{N_{\epsilon}} < \epsilon. 
\end{equation}
where $k_i$ represent the number of infections $\hat{v}$ is carrying at the moment this site gets its $i^{th}$ infection.

The first inequality in \eqref{neweq 1} holds by the monotonicity of the cumulative distribution function, as we have $\phi(x_i) \geq \phi(1)$ for all $x_i \geq 1$. \eqref{neweq 1} shows that for any positive value $\epsilon$, there always exists a $N_\epsilon$ such that the probability for $\hat{v}$ to survive after $N_\epsilon$ infections is smaller than $\epsilon$. Therefore, there exists a $N_0$ such that the probability for $\hat{v}$ to never get $N_0$ infections or to die before or when getting $N_0$ infection converges to 1 if we let $\epsilon$ goes to 0. However, in either case $\hat{v}$ is not immortal. This shows that any vertex cannot be immortal almost surely.
\end{proof}

Now we prove \textbf{Theorem \ref{th: 1}}.

\begin{proof}

It is sufficient to show that for all $\lambda \in \mathbb{R_+}$, the contact process with infection rate $\lambda$ will die out at some time $T$ which is almost surely finite. Take an arbitrary $\lambda \in \mathbb{R_+}$ to be the infection rate.

Due to the fact that no node in $V_0$ is immortal, then for any arbitrary node $v_i \in V_0$, there exists a $N_i^{\lambda} \in \mathbb{N}$ almost surely that the node will either die or never be infected again after getting $N_i^{\lambda}$ infections. Since each infection is healing following a Poisson process with rate 1, if a node survives after its $N_i^\lambda$ infections, the infections on it will eventually be cured as surely, so there exists a finite time $t_i^{\lambda} \in [0, \infty)$ such that $v_i$ will either die or survive forever after $t_i^{\lambda}$ almost surely. Therefore, by taking $T^{\lambda} = \max_{i \leq |V_0|}{t_i}$, all nodes either died or will never be infected again starting from $T^{\lambda}$, and we finish the proof.

\end{proof}

Next we give an example of a finite tree that satisfies Theorem \ref{th: 1}. This example will be helpful in the infinite case.

\maketitle
\subsection{Finite tree with fixed offspring number}
Let $T_{d,n}$ be a labeled finite tree with fixed offspring number $d \geq 2$ and fixed number of vertices $n$. Therefore, the degree of every node is $d + 1$, except for the leaf nodes, each of which has degree 1, and the root of the tree which has degree $d$. We denote the root of the tree as $ \{0 \} $.

If we apply Theorem \ref{th: 1} to this finite tree, we then know that the process will die out on $T_{d,n}$ for all $\lambda$ almost surely. Besides, one can observe from Figure 1 that whenever a site is removed from the graph, the graph will be split into a finite union of finite sub-trees, and in each sub-tree the offspring number of each node within it will be less than or equal to $d$. By the same token, when working with an infinite tree, one can observe that the removal of nodes will transform the whole graph to a finite union of finite and infinite sub-trees. This observation will be useful later on in the infinite tree setting.

\begin{figure}
    \centering
    \label{fig:1}
\begin{tikzpicture}
    % Node styles
    \tikzstyle{infected}=[fill={rgb,255: red,191; green,0; blue,64}, draw={rgb,255: red,191; green,0; blue,64}, shape=circle]
    \tikzstyle{Dead}=[fill=blue, draw=blue, shape=circle]
    \tikzstyle{none}=[fill=white, draw=black, shape=circle]
    \node [none] (0) at (0, 0) {};
    \node [none] (1) at (4/3, 6/3) {};
    \node [none] (2) at (5/3, 0) {};
    \node [none] (3) at (4/3, -6/3) {};
    \node [none] (4) at (9/3, 4/3) {};
    \node [none] (5) at (10/3, 6/3) {};
    \node [none] (6) at (9/3, 8/3) {};
    \node [none] (7) at (10/3, 0) {};
    \node [none] (8) at (4/3, 6/3) {};
    \node [none] (9) at (10/3, 1/3) {};
    \node [none] (10) at (10/3, -1/3) {};
    \node [none] (11) at (13/3, 10/3) {};
    \node [none] (12) at (14/3, 9/3) {};
    \node [none] (13) at (13/3, 8/3) {};
    \node [none] (14) at (13/3, 4/3) {};
    \node [none] (15) at (13/3, 5/3) {};
    \node [none] (16) at (13/3, 3/3) {};
    \node [none] (17) at (18/3, 9/3) {};
    \node [none] (18) at (18/3, 10/3) {};
    \node [none] (19) at (18/3, 8/3) {};
    \node [infected] (20) at (0, 0) {};
    \node [infected] (21) at (4/3, 6/3) {};
    \node [infected] (22) at (5/3, 0) {};
    \node [Dead] (23) at (9/3, 8/3) {};
    \node [infected] (24) at (14/3, 9/3) {};
    \node [infected] (25) at (18/3, 10/3) {};
    \node [infected] (26) at (13/3, 8/3) {};
    \node [infected] (27) at (13/3, 3/3) {};
    \draw (1.center) to (0.center);
    \draw (0.center) to (3.center);
    \draw (0.center) to (2.center);
    \draw (1.center) to (5.center);
    \draw (1.center) to (4.center);
    \draw (1.center) to (6.center);
    \draw (2.center) to (9.center);
    \draw (2.center) to (7.center);
    \draw (2.center) to (10.center);
    \draw (6.center) to (13.center);
    \draw (6.center) to (12.center);
    \draw (6.center) to (11.center);
    \draw (4.center) to (15.center);
    \draw (4.center) to (14.center);
    \draw (4.center) to (16.center);
    \draw (12.center) to (18.center);
    \draw (12.center) to (17.center);
    \draw (12.center) to (19.center);
\end{tikzpicture}
    \caption{An example of $T_{3,19}$ at some moment $\tilde{t}$, in which white nodes are normal nodes, red nodes are infected, and the blue node is dead.}
\end{figure}

\maketitle
\section{Infinite Scenario}

\maketitle
\subsection{Infinite tree with fixed degree}
Now we move our focus to infinite trees. 

Let $T_d$ be an infinite tree with fixed degree number $d, d \in \mathbb{N}, d \geq 3$, i.e each node is connected to its parent and at least two offspring. We denote the root of the tree as $ \{0 \} $. Let  $\xi = ( \xi_{t} : t\geq 0) $ be the multi-virus contact process on $T_d$ with infection rate $\lambda$ and healing rate 1, as specified in the introduction, and initial state $\xi_0 = \{ 0 \}$. 

In this case, we \textbf{cannot} assume monotonicity of survival of the process with respect to infection rate $\lambda$. The reason for not assuming monotonicity is clear: with a higher infection rate the virus will spread faster, and therefore further in the graph, but the infected nodes will also get more infections per time. The infected nodes are likely to result in a faster death, which is definitely bad for the survival of virus. Therefore, more work is needed to verify the existence of a critical value of $\lambda$ for the phase transition from death to survival of the process. 

As a matter of fact, the survival of the process may also not be monotonic with respect to the healing rate. When the healing rate increases, infections will be cured faster, but the nodes also are expected to live longer, which could be helpful for the survival of virus. Therefore, more work is needed to prove the existence of a critical value of the healing rate as well.

We can still give lower and upper bounds for $\lambda$, which guarantee the survival or death of the process, respectively.

\begin{theorem}
\label{upperb}
Assume $(1-\phi(1))(d-2) + 1 - M > 0 $, where $M$ is the same constant as in the definition of cumulative distribution function of death, i.e. $\phi(k) = 1 \text{ for all } k\geq M$. Then there exists $\lambda_* > 0$ such that if $\lambda \leq \lambda_*$ the contact process on $d$-regular infinite trees \textbf{dies out} almost surely. Further,
\begin{equation*}
    \displaystyle \lambda_* \geq \frac{1}{ \displaystyle (1-\phi(1))(d-2) + (1-2\phi(2))}.
\end{equation*}
\end{theorem}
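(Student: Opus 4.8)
The plan is to prove extinction for every $\lambda\le\lambda_*$ by showing that, at such rates, the set of vertices of $T_d$ that ever carry an infection is almost surely finite; once that is established, the process is eventually confined to a finite subgraph, and Theorem~\ref{th: 1} (equivalently, Lemma~\ref{le:1} applied on that subgraph) forces $\xi_t=\emptyset$ for some $t$. So the real content is to show that the spreading of the infection on the tree is subcritical when $\lambda$ is small, with explicit threshold $\lambda_*=\big[(d-2)(1-\phi(1))+(1-2\phi(2))\big]^{-1}$. Note this number is genuinely positive: the standing hypothesis $(1-\phi(1))(d-2)+1-M>0$ is incompatible with $M\le 1$, so $M\ge 2$ and hence $1-2\phi(2)\ge -1\ge 1-M$, whence the bracket is at least $(1-\phi(1))(d-2)+1-M>0$.

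The computational input is the action of the generator $\mathcal L$ on the total--infection functional $\Phi(\zeta)=\sum_x\zeta(x)$ (dead sites contributing $0$). Recoveries contribute $-\Phi(\zeta)$. An infection fired at a site $y$ currently holding $k$ infections raises $\Phi$ by $1$ with probability $1-\phi(k+1)$ and lowers it by $k$ with probability $\phi(k+1)$ (when $y$ dies, its whole stack, together with the incoming infection, is deleted), so the expected increment equals $h(k):=1-(k+1)\phi(k+1)$. Since $\phi$ is non-decreasing, $h$ is non-increasing, with $h(0)=1-\phi(1)$, $h(1)=1-2\phi(2)$, and $h(k)\le 1-M<0$ once $k\ge M-1$; in particular $\mathcal L\Phi(\zeta)=\sum_x\zeta(x)\bigl(-1+\lambda\sum_{y\sim x}h(\zeta(y))\bigr)$.

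To reach the stated $\lambda_*$ I would not estimate $\sum_{y\sim x}h(\zeta(y))$ crudely but build a Galton--Watson envelope for the infection along the tree, using the split of the tree into subtrees created by deaths (as in the discussion of Figure~\ref{fig:1}). The idea is that on $T_d$ every vertex $v\neq 0$ is first reached through its parent, so among the $d$ neighbours of $v$ only the $d-1$ children are fresh territory: the infections sitting on $v$ produce, over $v$'s (re-infection–extended) infectious lifetime, at most $h(0)$ worth of first infections per fresh child, the infection fired back along the parent edge is worth $h(1)=1-2\phi(2)$ because the parent then itself carries an infection, and one further child direction is absorbed by the re-infection feedback between $v$ and its parent, which is what reduces $d-1$ to $d-2$. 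Integrating these contributions against the expected infectious lifetime of $v$ and summing over re-infection stints yields an offspring mean at most $\lambda\bigl[(d-2)(1-\phi(1))+(1-2\phi(2))\bigr]$; for $\lambda\le\lambda_*$ this envelope is (sub)critical, so its total progeny is finite almost surely, the infection touches only finitely many vertices, and extinction follows from the first paragraph (the critical case $\lambda=\lambda_*$ being covered by extinction of critical Galton--Watson processes, and $\lambda<\lambda_*$ giving in addition a geometric decay rate).

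The main obstacle is precisely the construction in the previous paragraph. Because recovered vertices disconnect the infected set, one cannot assert at the level of a single configuration that an infected vertex has an infected neighbour, so the reduction of $d-1$ to $d-2$ and the replacement of one $h(0)$ by $h(1)$ cannot come from a one-step generator estimate; the crude bound $\sum_{y\sim x}h(\zeta(y))\le d\,h(0)$ only yields the weaker $\lambda_*\ge (d(1-\phi(1)))^{-1}$. The improvement must be read off the graphical representation, controlling the re-infection loop between each vertex and its parent -- showing that the number of re-infection stints and the infections they spawn are summable in expectation, and that the resulting offspring law is dominated by an honest i.i.d.\ family so that a Galton--Watson comparison is legitimate. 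This bookkeeping, together with the multi-infection effect (a second infection delivered to an already-infected vertex being, on average, detrimental to the infection, which is what produces the $\phi(2)$-term), is the technical heart of the proof.
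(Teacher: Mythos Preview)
Your route is genuinely different from the paper's, and while the reduction ``show only finitely many vertices are ever touched, then invoke Theorem~\ref{th: 1}'' is a legitimate strategy, the mechanism you propose for reaching the \emph{precise} constant does not work.

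The paper never builds a Galton--Watson envelope and never looks at the linear functional $\Phi(\zeta)=\sum_x\zeta(x)$. It runs a Pemantle-type weighted argument: for $\rho\in(0,1)$ set $\nu_\rho(A)=\rho^{I_A}$ with $I_A$ the total current infection count, put $g^A_\lambda(t)=E^A_\lambda[\nu_\rho(V_t^*)]$, and compute $\frac{d}{dt}g^A_\lambda(t)\big|_{t=0}$ term by term (healing, infection of a healthy neighbour, infection of an already-occupied site in $A$, kill). The factor $(d-2)$ comes from the global edge-boundary inequality
\[
N_A=\bigl|\{\langle x,y\rangle:\ x\in A,\ y\notin A\}\bigr|\ \ge\ d|A|-2(|A|-1)=(d-2)|A|+2,
\]
applied to the whole infected set at once, and the $(1-2\phi(2))$ arises from the within-$A$ infection/kill contribution after optimising over $\rho\in(0,1)$. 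The argument is then repeated for configurations with $i$ infections per site and for mixtures, checking that the worst case is $i=1$, and finally that node removals only help.

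Your branching comparison honestly yields the cruder $\lambda_*\ge\bigl(d(1-\phi(1))\bigr)^{-1}$, as you say, but the story you tell to upgrade it is not right. The $(d-2)$ is \emph{not} a single-vertex statement (``one further child direction absorbed by the re-infection feedback between $v$ and its parent''): it is the edge-boundary count for the \emph{entire} currently infected set. A per-vertex branching bound sees $d$ neighbours of a newly explored vertex, of which at most one --- the parent in the exploration --- is already accounted for; there is no honest way to knock off a second neighbour, so $(d-1)$ is the best coefficient such a comparison can produce. Similarly, the paper's $\phi(2)$ correction comes from the generator term for an infection landing on an already-occupied site of $A$ (self-infection within the infected set), which in the exponential weight picks up an extra factor of $\rho$ on survival and $\rho^{-i}$ on kill; in a Galton--Watson bound re-infections of the parent simply do not generate new offspring, so that term has no counterpart. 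The ``bookkeeping with the graphical representation'' you allude to will therefore not close the gap: to obtain the stated $\lambda_*$ you need to pass to the exponential weight $\rho^{I_A}$ and the boundary estimate above.
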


\begin{theorem}
\label{thm:3}
Assume $1-\phi(1) - \phi(2) > 0$.  there exists $\lambda^* > 0$ such that if $\lambda \geq \lambda^*$ the contact process \textbf{survive} on $d$-regular infinite trees almost surely. Further, 
\begin{equation*}
    \displaystyle \lambda^* \leq \frac{1}{1-\phi(1)-\phi(2)}.
\end{equation*}
\end{theorem}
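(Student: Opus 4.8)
The plan is to force survival for large $\lambda$ by embedding a supercritical branching process into the MVCP and invoking that such a process is infinite with positive probability. The branching individuals will be ``fresh'' singly-infected vertices, and the whole point of the hypothesis $\lambda\geq\lambda^*$ will be to make the expected number of fresh singly-infected children produced by one such vertex strictly larger than $1$.

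Concretely, I would call a vertex $v$ \emph{active} at the first instant it receives an infection without being killed by it; at that moment $v$ carries exactly one infection and, in the construction below, its $d-1$ children are still healthy. Restricting attention to the time interval on which $v$ still carries exactly one infection, I would isolate the competing Poisson clocks acting at $v$: for each healthy child $w$, the clock ``$v$ passes an infection to $w$ and $w$ survives it'' of rate $\lambda(1-\phi(1))$; the healing clock of rate $1$; and the clock ``$v$ is re-infected by its parent, reaching two infections, and is killed'' of rate $\lambda\phi(2)$ (the parent carries one infection in the construction). Declaring $v$ ``done'' as soon as it heals, is killed, or acquires a second infection, a memoryless race argument gives that, conditional on reaching a given healthy child before being done, $v$ activates that child with probability at least
\begin{equation*}
\frac{\lambda(1-\phi(1))}{\lambda(1-\phi(1))+1+\lambda\phi(2)},
\end{equation*}
which exceeds $\tfrac12$ precisely when $\lambda>\tfrac{1}{1-\phi(1)-\phi(2)}$; since $d-1\geq 2$, running this for two disjoint children yields an offspring count stochastically dominating a law of mean strictly above $1$.

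I would then assemble the genealogy of active vertices into a Galton--Watson-type tree. To make the sub-experiments at distinct active vertices independent (and the ``bad'' rate genuinely bounded by $1+\lambda\phi(2)$), I would couple the MVCP with a modified dynamics in which, once a vertex becomes active, the single edge leading back toward the root is used at most once more and then ignored; offspring of distinct active vertices then live in disjoint, independently evolving subtrees, so the one-generation estimate applies verbatim at every generation. A supercritical branching process is infinite with positive probability, and on that event the MVCP has an infected vertex at arbitrarily large times, hence $P_\lambda(\xi_t\neq\emptyset\ \forall t)>0$; reading the threshold off the displayed inequality gives $\lambda^*\leq\tfrac{1}{1-\phi(1)-\phi(2)}$.

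The main obstacle is the bookkeeping around re-infections and multi-infection states. One must argue that discarding the (favourable) scenarios in which a re-infection does not kill $v$, and the scenarios in which $v$'s count climbs toward $M$, only weakens the estimate; that the hypothesis $1-\phi(1)-\phi(2)>0$ — which forces $\phi(2)<1$, hence $M\geq 3$ — is exactly what keeps the lethal $1\to 2$ transition from swamping the good clock; and, most delicately, that discarding infection arrows (``freezing'' edges back toward the root) really produces a lower bound on survival despite the explicit lack of monotonicity in $\lambda$. The safeguard here is that the coupling never compares two values of $\lambda$: it only couples the MVCP at a fixed $\lambda$ with a truncated version of its own graphical representation. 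Proving that this truncation is a genuine lower coupling, so that survival of the truncated (branching) process implies survival of the full MVCP, is the crux of the argument.
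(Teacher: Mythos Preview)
Your route and the paper's are genuinely different, and in fact they establish opposite implications.

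The paper does not embed a branching process. Instead it tracks the total infection count $N_t=\sum_x\xi_t(x)$ and couples it from above by a nearest-neighbour random walk $(W_t)$ on $\mathbb{N}$, absorbed at $0$, with upward step probability
\[
p_W=\frac{\lambda(1-\phi(1))}{1+\lambda(1-\phi(1))+\lambda\phi(2)}\,.
\]
The heuristic is that each infection heals at rate $1$, is passed successfully at rate at most $\lambda(1-\phi(1))$, and any pass to an already-occupied site kills that site (removing at least one infection) at rate at least $\lambda\phi(2)$. Since $N_t$ lies stochastically below $W_t$, absorption of the walk forces $N_t\to0$; this happens whenever $p_W\le\tfrac12$, i.e.\ $\lambda\le 1/(1-\phi(1)-\phi(2))$, and the paper reads this off as the bound on $\lambda^*$. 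Observe, though, that what this argument actually yields is die-out for $\lambda$ \emph{below} the threshold --- a necessary condition for survival --- whereas the theorem as written asserts survival \emph{above} the threshold. The paper's proof never exhibits a value of $\lambda$ at which the process survives.

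Your Galton--Watson embedding is aimed squarely at that sufficient direction and exploits the tree geometry (the $d-1$ disjoint forward subtrees at every active vertex), which the scalar count $N_t$ ignores entirely; this is the standard mechanism for proving survival on trees. The monotonicity caveat you raise is exactly the right worry, and your resolution --- couple the MVCP at a \emph{fixed} $\lambda$ with a truncated version of its own graphical representation, never comparing two values of $\lambda$ --- is the correct one. One bookkeeping point to tighten: your displayed ratio uses bad rate $1+\lambda\phi(2)$, but if ``done'' includes a non-lethal second infection then the bad rate is $1+\lambda$, while if it does not then the parent may re-infect repeatedly and a single rate-$\lambda\phi(2)$ clock is not the whole kill intensity. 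Freezing the back-edge after one use, as you propose, turns the parent's contribution into a single Bernoulli$(\phi(2))$ kill rather than a Poisson clock, so the race needs to be reorganised accordingly; once you also count all $d-1\ge2$ forward children properly (the single infection at $v$ attempts each of them independently until $v$ heals or is killed), the supercriticality threshold you obtain can in fact be pushed below $1/(1-\phi(1)-\phi(2))$.
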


\begin{remark}
    The reason behind we need the assumptions in both theorems is clear: we need the denominator to be greater than 0 throughout the proof. Here we give some heuristic explanations about the assumptions we made in both theorems. In Theorem \ref{upperb}, we assume the offspring number of the tree to be larger than the maximum total number of infections of any node, and the ratio between the offspring number and maximum infection number is determined by the function of death. In Theorem \ref{thm:3}, we assume the virus not to be too \textit{detrimental} and do not have high probability of killing the node in the first or second infections.  
\end{remark}

\maketitle
\subsubsection{Proof of Theorem \ref{upperb}}
Let $\rho \in (0, 1)$, and let $\nu_{\rho}(A) = \rho^{I_A}$ for any finite subset $A$ of the vertex-set $V_0$ of $T_d$, where $I_A$ is the total number of infections we have on $A$. At any given moment $t$,  we define $V_t^*$ are the unhealthy but alive vertices in $V_0$ at moment $t$, i.e. $V_t^* = \{v \in V_0, \quad \xi_t(v) \ne 0 \text{ or } \emptyset \}$. It is clear that $V_t^*$ is a finite set. Now we work with the process $\nu_{\rho}(V_t^*)$. Let $g^A_{\lambda}(t) = E^{A}_{\lambda}(\nu_{\rho}(V_t^*))$, which is the expected value of the process $\nu_{\rho}(V_t^*)$ with infection starting on set $A$ with infection rate $\lambda$. 

Before the proof begins, here is a short outline of the reasoning. We first find the lower bound of the infection rate $\lambda$ at which $(g^A_\lambda(t))' \leq 0$. If $ (g^{V_u^*}_\lambda(t))' \leq 0$ under all circumstances because of the $\lambda$ we choose, then by the Markov property, both the conditional probability and expectation do not depend on the current time, so
\begin{equation}
\label{markov}
    \frac{d}{du}{g_{\lambda}^{A}(u)} =E^{A}_{\lambda} \bigg(\frac{d}{dt}{g_{\lambda}^{V_u^*}(t)} \bigg| _{t=0} \bigg) \leq 0,
\end{equation}
implying that $g^A_\lambda(u)$ is non-increasing in $u$. Now, assume we start the infection at the root of the tree, then we have $g^{\{0\}}(0) = \rho < 1$, so therefore $\lim_{u \to \infty} g^A(u) < 1$. On the other hand, note that when all virus disappears, we have $\lim_{t \to \infty} g^A(t) = 1$ representing the death of the process, so when \eqref{markov} holds, the process will survive. The infection rate $\lambda$ must be smaller than the lower bound at which $(g^A_\lambda(t))' \leq 0$ in order for the process to die out, so the lower bound becomes the upper bound in Theorem \ref{upperb}.

In the following proofs, we will discuss three special cases and generalize them to get our final results. 

\begin{lemma}
\label{lem3.1}
Let $A$ be any finite subset of the vertex-set $V_0$ of $T_d$. Let all sites in $A$ have exactly 1 infection at $t=0$. Let $\lambda_1$ be an infection rate which leads to the die out of the process starting at $A$. Then
\end{lemma}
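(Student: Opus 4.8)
The plan is to read the asserted inequality off the infinitesimal generator of the auxiliary process $\rho^{\,N_t}$, where $N_t:=\sum_{v}\xi_t(v)$ is the total infection mass carried by $\xi_t$. Since only alive unhealthy sites hold infections, $\rho^{\,N_t}=\nu_\rho(V^*_t)$, so $g^A_\lambda(t)=E^A_\lambda[\rho^{\,N_t}]$. Applying the generator $\mathcal L$ of the MVCP to $f(\zeta)=\rho^{\,N(\zeta)}$ with the rates from the Mathematical Formulation, the healing events (a cure at site $x$ at rate $\zeta(x)$, lowering $N$ by one) and the transmission events (along each still-present ordered edge $x\sim y$ a copy of an infection at $x$ reaches $y$ at rate $\lambda\zeta(x)$, after which $N$ goes up by one with probability $1-\phi(\zeta(y)+1)$ and down by $\zeta(y)$ with probability $\phi(\zeta(y)+1)$, the latter being the mass destroyed together with $y$) combine to give
\[
\mathcal Lf(\zeta)=\rho^{\,N(\zeta)}\!\left[\,N(\zeta)\big(\tfrac1\rho-1\big)+\lambda\!\!\sum_{x\sim y}\zeta(x)\,h_\rho(\zeta(y))\right],\qquad
h_\rho(k):=(1-\phi(k{+}1))(\rho-1)+\phi(k{+}1)(\rho^{-k}-1),
\]
and by the Markov property $\frac{d}{du}g^A_\lambda(u)=E^A_\lambda[\mathcal Lf(\xi_u)]$, which is exactly \eqref{markov}. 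So the sign of the bracket governs the monotonicity of $g^A_\lambda$.

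Next I would specialise to time $0$ and to the configuration of the lemma, $\xi_0=\mathbf 1_A$: then $N(\xi_0)=|A|$, and in the edge-sum only $h_\rho(0)=(1-\phi(1))(\rho-1)$ and $h_\rho(1)=(1-\phi(2))(\rho-1)+\phi(2)(\rho^{-1}-1)$ occur, each weighted by the number of ordered edges of $T_d$ running from $A$ into $A$, resp.\ from $A$ into the healthy complement. Every vertex of $T_d$ has degree $d$, and since $T_d$ is a tree, if $A$ spans $e$ edges then the edge-sum is $2e\,h_\rho(1)+\big(d|A|-2e\big)h_\rho(0)$ with $0\le e\le|A|-1$. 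For $\rho<1$ one checks $h_\rho(1)\ge h_\rho(0)$ using monotonicity of $\phi$, so the bracket is maximised (equivalently, hardest to push below $0$) by taking $A$ connected, $e=|A|-1$; substituting this, pulling out $\rho-1$ and using $\tfrac1\rho-1=(1-\rho)/\rho$ reduces everything to the sign of an expression whose behaviour as $\rho\uparrow1$ (up to an $O(1/|A|)$ boundary correction) isolates the combination $(1-\phi(1))(d-2)+(1-2\phi(2))$. Comparing with the fact that a process which dies out cannot have $g^A_\lambda$ non-increasing --- it starts at $\rho^{|A|}<1$ but, by bounded convergence, tends to $1$ on the die-out event --- then forces the stated constraint on any die-out rate $\lambda_1$, in the shape of the denominator appearing in Theorem~\ref{upperb}.

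The main difficulty is not this computation but the two bookkeeping points around it. First, \eqref{markov} only yields monotonicity of $g^A_\lambda$ once $\mathcal Lf\le0$ holds along the \emph{whole} trajectory, so one must pass from "$\mathcal Lf(\xi_0)$" to "$\mathcal Lf(\xi_u)$ for every configuration the process can reach"; handling configurations with sites carrying $2,3,\dots,M-1$ infections is precisely what forces the extra hypotheses, and I expect this is why the companion lemmas (the "two-infection" case and the general bookkeeping) are proved first and then combined. Second, $h_\rho(k)$ becomes \emph{positive} once $(k+1)\phi(k+1)>1$, in particular for $k$ near $M-1$, because such a site dies on its next infection and sheds a whole block of mass; controlling these transient positive contributions --- this is where the constant $M$ and the assumption $(1-\phi(1))(d-2)+1-M>0$ enter --- is the real obstacle, and one should expect the optimal $\rho$ (sent to $1$ here) to be dictated by that balance rather than by the single all-$1$ term isolated in this lemma.
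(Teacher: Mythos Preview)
Your approach is essentially the paper's: compute the time-$0$ derivative of $g^A_\lambda(t)=E^A_\lambda[\rho^{I_t}]$ by listing the transitions (heal, infect outside, infect inside, kill inside), bound the external edge count via the tree identity $N_A\ge d|A|-2(|A|-1)$, and then argue that a non-increasing $g^A_\lambda$ (starting below $1$) is incompatible with die-out (limit $1$), forcing the constraint on $\lambda_1$. Your generator formulation and the recognition that the argument only closes once $\mathcal Lf\le 0$ is checked along the whole trajectory (hence the companion lemmas) are exactly in line with the paper's outline.

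One bookkeeping point: your count of ordered internal edges is $2e\le 2(|A|-1)$, whereas the paper writes the internal transmission rate simply as $\lambda_1|A|$. If you carry your own (correct) count through and let $\rho\uparrow 1$, the leading $|A|$-coefficient that emerges is $(1-\phi(1))(d-2)+2(1-2\phi(2))$, not $(1-\phi(1))(d-2)+(1-2\phi(2))$ as you state; the resulting bound on $\lambda_1$ is \emph{smaller} than the paper's, so it still implies the inequality in the lemma, but your last line silently reverts to the paper's constant. Either adopt the paper's cruder internal count from the start or keep your own and note that it yields a strictly sharper conclusion.
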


\begin{equation}
      \displaystyle \lambda_1 \leq \frac{1}{ \displaystyle (1-\phi(1))(d-2) + (1-2\phi(2))}.
\end{equation}

\begin{proof}
We have

\begin{equation}
\label{eq:1}
\begin{split}
g^{A}_{\lambda_1}(t) =& |A|t(\frac{\nu_{\rho}(A)}{\rho}) + \lambda_1 N_A t(1-\phi(1))(\nu_{\rho}(A) \rho) \\
&+ \lambda_1|A|t(\frac{\nu_{\rho}(A)}{\rho})\phi(2) +\lambda_1|A|t(\nu_{\rho}(A)\rho)(1-\phi(2)) \\
&+  \nu_{\rho}(A)(1- |A|t -\lambda_1 N_A t(1-\phi(1)) \\
&- \lambda_1|A|t \phi(2)- \lambda_1|A|t(1-\phi(2) ) \\
 &+ o(t),
\end{split}
\end{equation}

as $t \downarrow 0$, where 
\begin{center}
    $N_{A} = |\{ \langle x, y\rangle : x \in A, y\notin A \}|$
\end{center}
is the number of edges of $T_d$ with exactly one end vertex in $A$.

Table 1 shows how each term in \eqref{eq:1} represents a case of interaction between $A$ and its surroundings.

\begin{table}[h!]
  \begin{center}   
    \label{tab:table1}
    \begin{tabular}{||c c||} 
      \textbf{Term} & \textbf{Representation} \\
      \hline
      $|A|t(\frac{\nu_{\rho}(A)}{\rho})$ & Healing \\
      $\lambda_1 N_A t(1-\phi(1))(\nu_{\rho}(A) \rho)$ & Infecting surrounding \\
      $\lambda_1|A|t(\nu_{\rho}(A)\rho)(1-\phi(2))$ & Infecting nodes in $A$ \\
      $\lambda_1|A|t(\frac{\nu_{\rho}(A)}{\rho})\phi(2)$ & Killing the nodes in $A$ \\
       The Rest & No change
    \end{tabular}
     \caption{Representation of Each Term in \eqref{eq:1}.}
  \end{center}
\end{table}

The estimation for the number of neighbors of $A$ can be done in the following way. The minimum number of neighbors of A with given cardinality is achieved when all nodes in $A$ are on the same big branch of the tree. That is, if we remove all other nodes and edges from $G_0$ and only keep the nodes in $A$ and edges connecting them, they still have the shape of one finite tree, and we'll call this tree $T^*_d$ = ($A$, $E^*$). For each node in $T^*_d$, we count its offspring number plus one, which means the overall sum for all node in $A$ is $d|A|$ at the moment $t=0$. Here the 'plus one' represents the node itself. At this moment, this sum includes three types of nodes. The first type is the leaves of leaves in $T^*_d$, which do not belong to $A$ and are counted once. The second type represents the nodes in $A$ besides the root of $T^*_d$, which are counted twice. The third type is the root of $T^*_d$, which belongs to $A$ and is counted once. Now, we want are the neighbors of $A$, $N_A$, which represents the leaves of leaves in $T^*_d$ and the parent of the root of $T^*_d$, so we will have to deduct the size of $A$ minus one twice from the overall sum. That is, 
\begin{equation}
\label{ineq}
    |\{ \langle x, y\rangle : x \in A, y\notin A \}| \geq d |A| - 2(|A|-1),
\end{equation}
and we have
\begin{equation*}
\begin{split}
\frac{d}{dt}{g^{A}_{\lambda_1}(t)} \bigg| _{t=0} =& (1-\rho) \nu_{\rho}(A) \bigg\{ \frac{|A|}{\rho} + (\frac{\lambda_1|A|}{\rho})\phi(2) \\ &- \lambda_1 N_A (1-\phi(1)) - \lambda_1|A|(1-\phi(2)) \bigg \} \\ &\leq (1-\rho) \nu_{\rho}(A) \bigg \{ \frac{|A|}{\rho} + (\frac{\lambda_1|A|}{\rho})\phi(2) \\ &- \lambda_1 \bigg [d |A| - 2(|A|-1) \bigg ]
(1-\phi(1)) - \lambda_1|A|(1-\phi(2)) \bigg \} \\ &= (1-\rho) \nu_{\rho}(A) \bigg\{ |A| \{ \frac{1}{\rho} + \frac{\lambda_1 \phi(2)}{\rho} \\ &-\lambda_1(1-\phi(1))[ d-2 ] -\lambda_1(1-\phi(2)) \} \\ &- 2\lambda_1 (1-\phi(1))  \bigg \} \leq 0
\end{split}
\end{equation*}
when
\begin{equation*}
     \frac{1}{\rho} + \frac{\lambda_1 \phi(2)}{\rho} -\lambda_1(1-\phi(1))[ d-2 ] -\lambda_1(1-\phi(2)) \leq 0,
\end{equation*}
i.e. 
\begin{equation}
    \rho \lambda_1 \bigg \{ (1-\phi(1))(d-2) + (1-\phi(2)) - \frac{\phi(2)}{\rho}  \bigg \}  \geq 1.
\end{equation}

By the argument we have in the outline, in order for the process to die out, we need
\begin{equation}
      \lambda_1 \bigg \{ \rho [(1-\phi(1))(d-2) + (1-\phi(2)) - \frac{\phi(2)}{\rho} ] \bigg \}  \leq 1 \quad \forall \rho \in (0, 1),
\end{equation}
which leads to 
\begin{equation}
    \displaystyle{ \lambda_1   \leq \frac{1}{  (1-\phi(1))(d-2) + (1-2\phi(2))} }
\end{equation}
and finishes the proof.

\end{proof}

\begin{lemma}
\label{lemma2}
Let $A$ be any finite subset of the vertex-set $V_0$ of $T_d$. Let all sites in $A$ have exactly $i$ infections at $t=0$, and $i$ satisfies the condition that
\begin{equation}
    [1-\phi(1)](d-2) + 1 - (i+1)\phi(i+1) > 0. 
\end{equation}
Let $\lambda_i$ be an infection rate which leads to the die out of the process starting at $A$. Then
\begin{equation}
\label{new lem1}
    \displaystyle{ \lambda_i  \leq \frac{1}{ (1-\phi(1))(d-2) + (1-\phi(i+1)) -  i \phi(i+1) }}.
\end{equation}
\end{lemma}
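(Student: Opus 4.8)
The plan is to run the computation behind Lemma~\ref{lem3.1} again, the only substantive change being that every site of $A$ now carries $i$ infections rather than one. This has four effects: each site carries $i$ independent rate-$1$ recovery clocks; each of the $i$ infections at a site can cross every incident edge, so the transmission rates pick up a factor $i$; the death probability triggered by a transmission that lands inside $A$ is $\phi(i+1)$ instead of $\phi(2)$; and---the one genuinely new feature---when such a transmission kills the receiving node, that node leaves the graph together with \emph{all} $i+1$ of its infections, so the total infection count drops by $i$ and the weight $\nu_\rho$ gets multiplied by $\rho^{-i}$ rather than by $\rho^{-1}$.

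First I would write the first-order-in-$t$ expansion of $g^{A}_{\lambda_i}(t)=E^{A}_{\lambda_i}(\nu_\rho(V^*_t))$ in the format of \eqref{eq:1}. Recoveries happen at total rate $i|A|$ and send the weight to $\nu_\rho(A)/\rho$. Transmissions across the $N_A$ boundary edges happen at total rate $\lambda_i\, i\, N_A$; such an event puts one infection on a healthy outside vertex, taking the weight to $\nu_\rho(A)\rho$ with probability $1-\phi(1)$ and leaving it unchanged (the vertex dies carrying its one new infection) with probability $\phi(1)$. Transmissions landing inside $A$ happen at $i$ times the rate of the corresponding term of \eqref{eq:1}, i.e.\ $\lambda_i\, i\, |A|$; each bumps a site from $i$ to $i+1$ infections, after which with probability $1-\phi(i+1)$ the weight becomes $\nu_\rho(A)\rho$, while with probability $\phi(i+1)$ the site dies and the weight becomes $\nu_\rho(A)\rho^{-i}$. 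Every other transition contributes no change. Collecting the terms, substituting $N_A\ge d|A|-2(|A|-1)$ from \eqref{ineq} (this also produces a harmless non-positive term, as in Lemma~\ref{lem3.1}), differentiating at $t=0$ and factoring out $(1-\rho)\nu_\rho(A)$ just as in Lemma~\ref{lem3.1}, I expect to reach
\begin{equation*}
\frac{d}{dt}g^{A}_{\lambda_i}(t)\Big|_{t=0}\ \le\ (1-\rho)\,\nu_\rho(A)\, i\,|A|\left[\frac1\rho-\lambda_i(1-\phi(1))(d-2)-\lambda_i\bigl(1-\phi(i+1)\bigr)+\frac{\lambda_i\,\phi(i+1)\,(1+\rho+\cdots+\rho^{i-1})}{\rho^{i}}\right],
\end{equation*}
where the last summand comes from $\rho^{-i}-1=\tfrac{1-\rho}{\rho^{i}}\,(1+\rho+\cdots+\rho^{i-1})$, which is what lets the common factor $(1-\rho)$ come out.

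Next, as in the outline and in Lemma~\ref{lem3.1}, I would require the bracket to be $\le 0$ for \emph{some} $\rho\in(0,1)$. The bracket is strictly decreasing in $\rho$ on $(0,1)$, so this is possible exactly when its limit as $\rho\uparrow1$ is negative; that limit equals $1-\lambda_i D_i$ with $D_i:=(1-\phi(1))(d-2)+1-(i+1)\phi(i+1)$, and the hypothesis of the lemma is precisely the statement $D_i>0$, so the condition reads $\lambda_i>1/D_i$. Then \eqref{markov} and the rest of the outline apply: once $\lambda_i>1/D_i$ one fixes such a $\rho$, finds $g^{A'}_{\lambda_i}$ non-increasing for every finite set $A'$ all of whose sites carry $i$ infections, and concludes that the process survives. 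Hence an infection rate $\lambda_i$ that leads to die-out must satisfy $\lambda_i\le 1/D_i$, and since $D_i=(1-\phi(1))(d-2)+(1-\phi(i+1))-i\phi(i+1)$ this is exactly \eqref{new lem1}.

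The main obstacle is the bookkeeping of the ``kill from level $i$'' transition: keeping the weight factor $\rho^{-i}$ correct and then extracting the clean coefficient $i\,\phi(i+1)$ in the limit $\rho\uparrow1$ through $1-\rho^{i}=(1-\rho)\sum_{k=0}^{i-1}\rho^{k}$. This is the only place where the computation in Lemma~\ref{lem3.1} genuinely has to be redone, and it is also where the hypothesis $(1-\phi(1))(d-2)+1-(i+1)\phi(i+1)>0$ is used---to keep $D_i$ positive, so that passing from $\lambda_i D_i\le 1$ to $\lambda_i\le 1/D_i$ is legitimate. Everything else---in particular the fact that the final bound depends on neither $|A|$ nor the shape of $A$, so that it holds uniformly over all admissible starting sets---goes through verbatim as in Lemma~\ref{lem3.1}, thanks to \eqref{ineq}.
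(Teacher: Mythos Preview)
Your proposal is correct and follows essentially the same route as the paper: write the first-order expansion of $g^A_{\lambda_i}(t)$ with the four transition types (each rate acquiring a factor $i$, the internal-kill term now carrying the weight $\rho^{-i}$), differentiate at $t=0$, factor out $(1-\rho)\nu_\rho(A)$ via the identity $1-\rho^{-i}=-(1-\rho)\rho^{-i}\sum_{k=0}^{i-1}\rho^{k}$, apply \eqref{ineq}, and optimise over $\rho\in(0,1)$. The only cosmetic difference is in the last step: the paper multiplies the bracket by $\rho$ and observes the resulting expression is increasing in $\rho$, so its supremum is attained as $\rho\uparrow 1$; you instead note directly that the bracket itself is strictly decreasing in $\rho$ and take its limit as $\rho\uparrow 1$. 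Both yield $D_i=(1-\phi(1))(d-2)+(1-\phi(i+1))-i\phi(i+1)$ and the same bound.
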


\begin{proof}
    We have

\begin{equation}
\label{eq:2}
\begin{split}
    g^{A}_{\lambda_i}(t) = & i|A|t(\frac{\nu_{\rho}(A)}{\rho}) + \lambda_i i N_A t(1-\phi(1))(\nu_{\rho}(A) \rho) \\ &+ \lambda_i i|A|t (\frac{\nu_{\rho}(A)}{\rho^{i}})\phi(i+1)  + \lambda_i i|A|t(\nu_{\rho}(A)\rho)(1-\phi(i+1)) \\ &+ \nu_{\rho}(A)\bigg [1- i|A|t -\lambda_i i N_A t(1-\phi(1)) \\ &- \lambda_i i|A|t \phi(i+1)- \lambda_i i|A|t(1-\phi(i+1) \bigg]  \\ &+ o(t),
\end{split}
\end{equation}

as $t \downarrow 0$, 

Table 2 shows how each term in \eqref{eq:2} represents a case of interaction between set $A$ and its surrounding.

\begin{table}[h!]
  \begin{center}   
    \label{tab:table2}
    \begin{tabular}{||c c||} 
      \textbf{Term} & \textbf{Representation} \\
      \hline
      $i|A|t(\frac{\nu_{\rho}(A)}{\rho})$ & Healing \\
      $i \lambda_i N_A t(1-\phi(1))(\nu_{\rho}(A) \rho)$ & Infecting surrounding \\
      $i \lambda_i|A|t(\nu_{\rho}(A)\rho)(1-\phi(i+1))$ & Infecting nodes in $A$ \\
      $i \lambda_i|A|t(\frac{\nu_{\rho}(A)}{\rho^i})\phi(i+1)$ & Killing the nodes in $A$ \\
      The Rest & No change
    \end{tabular}
     \caption{Representation of Each Term in \eqref{eq:2}.}
  \end{center}
\end{table}
By taking the first derivative with respect to $t$, we have
\begin{equation*}
\begin{split}
\frac{d}{dt}{g^{A}_{\lambda_i}(t)} \bigg| _{t=0} =& (1-\rho) \nu_{\rho}(A) \bigg\{ \frac{i|A|}{\rho} + (\frac{\lambda_i i|A| (\rho^{i-1} + ... + 1)}{\rho^{i}})\phi(i+1) \\ &- \lambda_i i N_A (1-\phi(1)) - \lambda_i i|A|(1-\phi(i+1)) \bigg \} \\ & \leq (1-\rho) \nu_{\rho}(A) \bigg \{ \frac{i|A|}{\rho} + (\frac{\lambda_i i|A| (\rho^{i-1} + ... + 1)}{\rho^{i}})\phi(i+1) \\ &- \lambda_i i\bigg [(d+1) |A| - 2(|A|-1) \bigg ]
(1-\phi(1)) - \lambda_i i|A|(1-\phi(i+1)) \bigg \} \\ &= (1-\rho) \nu_{\rho}(A) \bigg\{ |A|i \bigg [ \frac{1}{\rho} + \frac{\lambda_i(\rho^{i-1} + ... + 1) \phi(i+1)}{\rho^i} \\ &-\lambda_i(1-\phi(1))[d-2] -\lambda_i(1-\phi(i+1)) \bigg ] - 2\lambda_i i (1-\phi(1))  \bigg \} \\ & \leq 0 
\end{split}
\end{equation*}
when
\begin{equation}
    \frac{1}{\rho} + \frac{\lambda_i(\rho^{i-1} + ... + 1) \phi(i+1)}{\rho^i} -\lambda_i(1-\phi(1))[ d-2 ] -\lambda_i(1-\phi(i+1)) \leq 0,
\end{equation}
i.e. 
\begin{equation}
 \rho \lambda_i \bigg \{ (1-\phi(1))(d-2) + (1-\phi(i+1)) - \frac{\phi(i+1)(\rho^{i-1} + ... + 1)}{\rho^i}  \bigg \}  \geq 1.
\end{equation}
In order for the process to die out, we must have
\begin{equation}
\label{eq:7}
      \lambda_i \bigg \{ \rho  \bigg [ (1-\phi(1))(d-2) + (1-\phi(i+1)) - \frac{\phi(i+1)(\rho^{i-1} + ... + 1)}{\rho^i} \bigg ] \bigg \}  \leq 1  \quad \forall \rho.
\end{equation}
Since the first derivative with respect to $\rho$ inside the big curly bracket is always larger than 0, the maximum is obtained when $\rho = 1$, which further leads to 
\begin{equation}
\label{new lem2}
    \displaystyle{ \lambda_i  \leq \frac{1}{ (1-\phi(1))(d-2) + (1-\phi(i+1)) -  i \phi(i+1) }}
\end{equation}
and completes the proof.
\end{proof}

We can also see that the upper bound on the infection parameter $\lambda_i$ when $i > 1$ in \ref{lem3.1} is strictly larger than the bound we got in the case when $i = 1$ in \eqref{lemma2}, so we can just use the bound for $i = 1$ to be the smallest upper bound when the number of infections are same for the nodes in $A$.

These cases are the general cases which will be helpful when we are dealing with more complicated situations. Now we focus on the case where there are different numbers of infections for the nodes in set $A$. we define $n_0(v)$ to be the number of infections that the node $v \in A$ has at $t=0$. we start with the case where $n_0(v)$ returns 2 values: $i$ and $j$.
\begin{lemma}
    Suppose the set of initial infections $A$ satisfies $A = B \cup C$. Here, each node in $B$ has $i$ infections, and each node in $C$ has $j$ infections at $t=0$. Without the loss of generality, we assume $j > i$. Then the process dies out only when

\begin{equation}
    \displaystyle{ \lambda  \leq \frac{1}{ (1-\phi(1))(d-2) + (1-\phi(i+1)) -  i \phi(i+1) }}.
\end{equation}

\end{lemma}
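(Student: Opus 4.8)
The plan is to mimic the computation of Lemma 3.1 and Lemma 3.3 verbatim, but now carrying two "mass" parameters for the two sub-populations $B$ and $C$, and then to show that the contribution of the $C$-part (the nodes with $j$ infections) can only make the derivative $\frac{d}{dt} g^A_\lambda(t)|_{t=0}$ more negative than it would be if every node carried only $i$ infections. Concretely, I would first write down the infinitesimal generator expansion of $g^A_\lambda(t) = E^A_\lambda(\nu_\rho(V^*_t))$ at $t=0$, splitting each "event sum" ($|A|$-type healing terms, $N_A$-type infection-of-surrounding terms, $|A|$-type internal reinfection, $|A|$-type killing) into a $B$-piece and a $C$-piece, exactly as in \eqref{eq:2} but with $i|B|$ and $j|C|$ replacing $i|A|$ where appropriate, and with $\phi(i+1)$, $\phi(j+1)$ in the respective killing/reinfection factors. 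I would keep the neighbour bound in the form $N_A \geq d|A| - 2(|A|-1)$ from \eqref{ineq}, which is insensitive to how the infections are distributed over $A$.

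\medskip

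Next, after taking $\frac{d}{dt}$ and factoring out $(1-\rho)\nu_\rho(A)$ as in the earlier lemmas, the bracketed expression becomes a sum of a $B$-contribution and a $C$-contribution, each of the shape $|B| i\,[\,\ldots_i\,] - 2\lambda i(1-\phi(1))$ and $|C| j\,[\,\ldots_j\,] - 2\lambda j(1-\phi(1))$ respectively, where $[\,\ldots_k\,] = \frac1\rho + \frac{\lambda \phi(k+1)(\rho^{k-1}+\cdots+1)}{\rho^k} - \lambda(1-\phi(1))(d-2) - \lambda(1-\phi(k+1))$ is precisely the key bracket from Lemma 3.3. The crucial point is that this per-node bracket $[\,\ldots_k\,]$, evaluated at $\rho=1$, equals $1 - \lambda[(1-\phi(1))(d-2) + (1-\phi(k+1)) - k\phi(k+1)]$, and since $\phi$ is non-decreasing and the effective "survival budget" $(1-\phi(1))(d-2) + 1 - (k+1)\phi(k+1)$ is decreasing in $k$, the bracket for $k=j$ is $\geq$ the bracket for $k=i$ for every $\lambda>0$. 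Hence whenever $\lambda$ is large enough to force the $i$-bracket $\leq 0$ it also forces the $j$-bracket $\leq 0$, so the whole derivative is $\leq 0$; by the Markov/monotonicity argument in \eqref{markov} the process then survives. Contrapositively, die-out requires the $i$-bracket to be $> 0$ for some admissible $\rho$, which (monotonicity in $\rho$, maximum at $\rho=1$, as already shown in Lemma 3.3) gives exactly $\lambda \leq \big[(1-\phi(1))(d-2) + (1-\phi(i+1)) - i\phi(i+1)\big]^{-1}$.

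\medskip

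The main obstacle I anticipate is \textbf{bookkeeping of the $\rho$-powers in the killing terms when two occupation numbers coexist}: a node in $B$ that dies removes $i$ infections (so $\nu_\rho$ gains a factor $\rho^{-i}$), while a node in $C$ that dies removes $j$, and one must be careful that the $\rho=1$ reduction is legitimate \emph{simultaneously} for both sub-sums — i.e. that the derivative-in-$\rho$ of each bracket is still non-negative on $(0,1)$, so that the worst case for die-out is genuinely $\rho\to 1$ for both pieces at once. A second, more cosmetic point is that the $C$-piece carries its own leftover $-2\lambda j(1-\phi(1)) \leq 0$, which only helps; I would remark that once the coefficient of $|B|$ (the $i$-bracket) is non-positive, every other term in the curly brace is already non-positive, so no balancing between $|B|$ and $|C|$ is needed and the bound is driven purely by the smaller index $i$. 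This also explains why the stated hypothesis is only the one attached to index $i$ (namely $[1-\phi(1)](d-2) + 1 - (i+1)\phi(i+1) > 0$): it is what makes the $i$-bracket actually able to change sign at a finite positive $\lambda$, whereas the $j$-part needs no hypothesis because it is dominated.
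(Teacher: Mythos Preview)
Your overall strategy---reduce to two copies of the Lemma~\ref{lemma2} bracket and let the smaller index $i$ dictate the bound---is the same endpoint the paper reaches, but your decomposition skips the one genuinely new ingredient in this lemma: the \emph{cross--interaction} between $B$ and $C$. When an infection on a $B$--node fires along an edge whose other endpoint lies in $C$, the target already carries $j$ infections, so the relevant death probability is $\phi(j+1)$ and the jump in $\nu_\rho$ upon a kill is $\rho^{-j}$, not $\rho^{-i}$ and not $\phi(1)$. Your proposed split ``$N_A$--type infection-of-surrounding'' together with ``$|A|$--type internal reinfection with $\phi(i+1)$, $\phi(j+1)$ in the respective factors'' implicitly treats every edge leaving $B$ as landing on a healthy node (factor $1-\phi(1)$), and every internal reinfection of a $B$--node as coming from another $B$--node. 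Neither is true once $B$ and $C$ are adjacent, so the clean decomposition into $|B|\,i[\ldots_i]$ and $|C|\,j[\ldots_j]$ simply does not hold as written. Relatedly, the single bound $N_A\ge d|A|-2(|A|-1)$ cannot be used directly, because the outgoing edges from $B$ are weighted by $i$ and those from $C$ by $j$; you need separate control of $N_B$ and $N_C$.

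The paper handles this by introducing $\hat N = |\{(x,y): x\in B,\ y\in C,\ x\sim_0 y\}|$ and writing out four extra terms in $g^A_\lambda(t)$: $B$ infects $C$, $C$ infects $B$, $B$ kills $C$, $C$ kills $B$ (Table~3). After differentiating and several coarse inequalities (replacing $i$ by $j$ in positive terms, using $|B|+|C|=|A|$, and bounding $N_B,N_C$ separately via \eqref{ineq}), the $\hat N$--terms collapse to expressions that are all non-positive on the left of the die-out inequality, so they can be dropped; only then does the problem split into \eqref{eq:9} and \eqref{eq:10}, which are exactly your two Lemma~\ref{lemma2} brackets. Your final monotonicity observation (the $j$--bracket is dominated by the $i$--bracket) is correct and is how the paper finishes, but you must first account for and then dispose of the $\hat N$--terms to get there.
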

\begin{proof}
We define $\hat{N}$ to be the number of nearest neighbor pairs $x, y$ with $x \in B$ and $y \in C$,  i.e. $\hat{N} = |\{ (x, y): x \in B, y\in C, x \sim_0 y \}|$, in the graph $G_0$. In this case, the expression for $g^{A}_{\lambda}(t)$ is going to have more terms than the former 2 cases due to the fact that the interaction between $B$ and $C$ should also be involved. So we have
\begin{equation}
\label{eq:3}
\begin{split}
    g^{A}_{\lambda}(t) =& \lambda it(N_B - \hat{N})[1-\phi(1)]\nu_\rho(A)\rho + \lambda i |B|t[1-\phi(i+1)]\nu_\rho(A)\rho \\ &+ \frac{\lambda i |B|t\phi(i+1)\nu_\rho(A)}{\rho^i} + \frac{|B|it\nu_\rho(A)}{\rho} + \lambda jt(N_C - \hat{N})[1-\phi(1)]\nu_\rho(A)\rho \\ &+ \lambda j |C|t[1-\phi(j+1)]\nu_\rho(A)\rho + \frac{\lambda j |C|t\phi(j+1)\nu_\rho(A)}{\rho^j} + \frac{|C|jt\nu_\rho(A)}{\rho}  \\ &+  \lambda i \hat{N} t [1-\phi(j+1)]\nu_\rho(A) \rho + \lambda j \hat{N} t [1-\phi(i+1)]\nu_\rho(A) \rho \\ &+  \frac{\lambda \hat{N} i t \phi(j+1)\nu_\rho(A)}{\rho^j} + \frac{\lambda \hat{N} j t \phi(i+1)\nu_\rho(A)}{\rho^i} \\ &+  \nu_\rho(A) \bigg [1 - \lambda it(N_B - \hat{N})[1-\phi(1)] - \lambda i |B|t[1-\phi(i+1)] \\ &- \lambda i |B|t\phi(i+1) - |B|it - \lambda jt(N_C - \hat{N})[1-\phi(1)] \\ &- \lambda j |C|t[1 -\phi(j+1)] -\lambda j |C|t\phi(j+1) - |C|jt \\ &- \lambda i \hat{N} t [1-\phi(j+1)] - \lambda j \hat{N} t [1-\phi(i+1)] \\ &- \lambda \hat{N} i t \phi(j+1) -  \lambda \hat{N} j t \phi(i+1) \bigg ] \\ &+ o(t) 
\end{split}
\end{equation}
as $t \downarrow 0$, where  
\begin{equation}
\label{eq:BC}
     N_{B} = |\{ \langle x, y\rangle : x \in B, y\notin B \}| 
\end{equation}
and 
\begin{equation}
     N_{C} = |\{ \langle x, y\rangle : x \in C, y\notin C \}| 
\end{equation}
correspond to the respective number of neighbor nodes that set $B$ or set $C$ has.

Table 3 shows how each term in \eqref{eq:3} represents a case of interaction within set $A$ and between $A$ and its surroundings.
\begin{table}[tph]
  \begin{center}   
    \label{tab:table3}
    \begin{tabular}{||c c||} 
      \textbf{Term} & \textbf{Representation} \\
      \hline
      $\lambda it(N_B - \hat{N})[1-\phi(1)]\nu_\rho(A)\rho$ & $B$ infects surrounding \\
      $\lambda i |B|t[1-\phi(i+1)]\nu_\rho(A)\rho$ & $B$ infects itself \\
      $\frac{\lambda i |B|t\phi(i+1)\nu_\rho(A)}{\rho^i}$ & $B$ kills itself \\
      $\frac{|B|it\nu_\rho(A)}{\rho}$ & $B$ heals \\
      $\lambda jt(N_C - \hat{N})[1-\phi(1)]\nu_\rho(A)\rho$ & $C$ infects surrounding \\
      $\lambda j |C|t[1-\phi(j+1)]\nu_\rho(A)\rho$ & $C$ infects itself \\
      $\frac{\lambda j |C|t\phi(j+1)\nu_\rho(A)}{\rho^j}$ & $C$ kills itself \\
      $\frac{|C|jt\nu_\rho(A)}{\rho}$ & $C$ heals \\
      $\lambda i \hat{N} t [1-\phi(j+1)]\nu_\rho(A) \rho$ & $B$ infects $C$ \\
      $\lambda j \hat{N} t [1-\phi(i+1)]\nu_\rho(A) \rho $ & $C$ infects $B$\\
      $\frac{\lambda \hat{N} i t \phi(j+1)\nu_\rho(A)}{\rho^j}$ & $B$ kills $C$ \\
      $\frac{\lambda \hat{N} j t \phi(i+1)\nu_\rho(A)}{\rho^i}$ & $C$ kills $B$ \\
      The Rest & No change
    \end{tabular}
     \caption{Representation of Each Term in \eqref{eq:3}.}
  \end{center}
\end{table}

We take the first derivative of the generating function with respect to $t$ and let $t = 0$. Then we have
\begin{equation}
\label{eq:4}
\begin{split}
\frac{d}{dt}{g^{A}_{\lambda}(t)} \bigg| _{t=0} =&  \lambda i(N_B - \hat{N})[1-\phi(1)]\nu_\rho(A)\rho  + \lambda i |B|[1-\phi(i+1)]\nu_\rho(A)\rho \\ &+ \frac{\lambda i |B|\phi(i+1)\nu_\rho(A)}{\rho^i} + \frac{|B|i\nu_\rho(A)}{\rho} \\ &+ \lambda j(N_C - \hat{N})[1-\phi(1)]\nu_\rho(A)\rho + \lambda j |C|[1-\phi(j+1)]\nu_\rho(A)\rho  \\ &+ \frac{\lambda j |C|\phi(j+1)\nu_\rho(A)}{\rho^j} + \frac{|C|j\nu_\rho(A)}{\rho}  +  \lambda i \hat{N} [1-\phi(j+1)]\nu_\rho(A) \rho \\ &+ \lambda j \hat{N} [1-\phi(i+1)]\nu_\rho(A) \rho  +  \frac{\lambda \hat{N} i \phi(j+1)\nu_\rho(A)}{\rho^j} \\ &+ \frac{\lambda \hat{N} j \phi(i+1)\nu_\rho(A)}{\rho^i} \\ &+ \nu_\rho(A) \bigg [ - \lambda i(N_B - \hat{N})[1-\phi(1)] - \lambda i |B|[1-\phi(i+1)] \\ &- \lambda i |B|\phi(i+1) - |B|i - \lambda j(N_C - \hat{N})[1-\phi(1)] \\ &- \lambda j |C|[1 -\phi(j+1)] - \lambda j |C|\phi(j+1) \\ &- |C|j - \lambda i \hat{N} [1-\phi(j+1)] - \lambda j \hat{N} [1-\phi(i+1)] \\ &- \lambda \hat{N} i \phi(j+1) -  \lambda \hat{N} j \phi(i+1) \bigg ].
\end{split}
\end{equation}
We take the common factor out, so \eqref{eq:4} leads to
\begin{equation}
\label{eq:5}
\begin{split}
    \nu_\rho(A)(1-\rho) & \bigg \{ - \lambda i(N_B - \hat{N})[1-\phi(1)] -\lambda i |B|[1-\phi(i+1)] \\ &+ \frac{\lambda i |B|\phi(i+1)(\rho^{i-1} + ... + 1)}{\rho^i} + \frac{|B|i}{\rho} -\lambda j(N_C - \hat{N})[1-\phi(1)] \\ &-\lambda j |C|[1-\phi(j+1)] +\frac{\lambda j |C|\phi(j+1)(\rho^{j-1} + ... + 1)}{\rho^j} +  \frac{|C|j}{\rho} \\ &-\lambda i \hat{N} [1-\phi(j+1)] - \lambda j \hat{N} [1-\phi(i+1)] \\ &+\frac{\lambda \hat{N} i \phi(j+1)(\rho^{j-1} + ... + 1)}{\rho^j} + \frac{\lambda \hat{N} j \phi(i+1)(\rho^{i-1} + ... + 1)}{\rho^i}    \bigg \} \\ &\leq \nu_\rho(A)(1-\rho) \bigg \{ \frac{|A|j}{\rho} + \frac{\lambda|A|j(\rho^{j-1} + ... + 1)}{\rho^j} \\ &+ \frac{2\lambda\hat{N}j\phi(j+1)(p^{j-1} + ... + 1)}{\rho^j}  - \lambda iN_B[1-\phi(1)] \\ &-\lambda i |B|[1-\phi(i+1)] -\lambda jN_C[1-\phi(1)] -\lambda j |C|[1-\phi(j+1)] \\ &+\lambda i \hat{N} [\phi(j+1)-\phi(1)] + \lambda j \hat{N} [\phi(i+1)-\phi(1)] \bigg \}      
\end{split}
\end{equation}
since $i \leq j$ and $|B| + |C| = |A|$. By \eqref{eq:BC} and \eqref{ineq}
\begin{equation*}
\begin{split}
    N_B \geq [(d+1) |B| - 2(|B|-1)] \\
    N_C \geq [(d+1) |C| - 2(|C|-1)],
\end{split}
\end{equation*}
so we have 
\begin{equation}
\begin{split}
     \eqref{eq:5} \leq& \nu_\rho(A)(1-\rho) \bigg \{ \frac{|A|j}{\rho} + \frac{\lambda|A|j(\rho^{j-1} + ... + 1)}{\rho^j} \\ &+ \frac{2\lambda\hat{N}j\phi(j+1)(p^{j-1} + ... + 1)}{\rho^j} \\ &- \lambda i \bigg [(d+1) |B| - 2(|B|-1) \bigg] [1-\phi(1)] -\lambda i |B|[1-\phi(i+1)] \\ &-\lambda j \bigg [(d+1) |C| - 2(|C|-1) \bigg][1-\phi(1)] -\lambda j |C| [1-\phi(j+1)] \\ &+\lambda i \hat{N} [\phi(j+1)-\phi(1)] + \lambda j \hat{N} [\phi(i+1)-\phi(1)] \bigg \} \leq 0
\end{split}
\end{equation}

when 
\begin{equation}
\begin{split}
    &\frac{|A|j}{\rho} + \frac{\lambda|A|j(\rho^{j-1} + ... + 1)}{\rho^j} + \frac{2\lambda\hat{N}j\phi(j+1)(p^{j-1} + ... + 1)}{\rho^j}  \\ &- \lambda i |B| (d-2) [1-\phi(1)] -\lambda i |B|[1-\phi(i+1)] \\&-\lambda j |C| (d-2)[1-\phi(1)] -\lambda j |C| [1-\phi(j+1)] \\&+\lambda i \hat{N} [\phi(j+1)-\phi(1)] + \lambda j \hat{N} [\phi(i+1)-\phi(1)] \leq 0,
\end{split}
\end{equation}
and this is equivalent to
\begin{equation}
\begin{split}
     &- \frac{\lambda|A|j(\rho^{j-1} + ... + 1)}{\rho^j} - \frac{2\lambda\hat{N}j\phi(j+1)(p^{j-1} + ... + 1)}{\rho^j}  \\ &+ \lambda i |B| (d-2) [1-\phi(1)] + \lambda i |B|[1-\phi(i+1)] +\lambda j |C| (d-2)[1-\phi(1)] \\ &+\lambda j |C| [1-\phi(j+1)] -\lambda i \hat{N} [\phi(j+1)-\phi(1)] \\ &- \lambda j \hat{N} [\phi(i+1)-\phi(1)] \geq \frac{|A|j}{\rho}.
\end{split}
\end{equation}
In order for the contact process to die out, we cannot let \eqref{eq:5} to be smaller than 0, so a necessary condition is 
\begin{equation}
\label{eq:8}
\begin{split}
    &- \frac{\lambda|A|j(\rho^{j-1} + ... + 1)}{\rho^j} - \frac{2\lambda\hat{N}j\phi(j+1)(p^{j-1} + ... + 1)}{\rho^j}  \\ &+ \lambda i |B| (d-2) [1-\phi(1)] + \lambda i |B|[1-\phi(i+1)] \\ &+\lambda j |C| (d-2)[1-\phi(1)] +\lambda j |C| [1-\phi(j+1)] \\ &-\lambda i \hat{N} [\phi(j+1)-\phi(1)] - \lambda j \hat{N} [\phi(i+1)-\phi(1)] \leq \frac{|A|j}{\rho} \quad \forall \rho \in (0, 1).
\end{split}
\end{equation}
We can then split $|A|$ back into $|B|$ and $|C|$. As the terms involving $\hat{N}$ are all negative terms, \eqref{eq:8} becomes true whenever
\begin{equation}
\label{eq:9}
    - \frac{\lambda|B|j(\rho^{j-1} + ... + 1)}{\rho^j} + \lambda i |B| (d-2) [1-\phi(1)] + \lambda i |B|[1-\phi(i+1)] \leq \frac{|B|j}{\rho}
\end{equation}
and 
\begin{equation}
\label{eq:10}
    - \frac{\lambda|C|j(\rho^{j-1} + ... + 1)}{\rho^j} +\lambda j |C| (d-2)[1-\phi(1)] +\lambda j |C| [1-\phi(j+1)] \leq \frac{|C|j}{\rho}.
\end{equation}
Note that both \eqref{eq:9} and \eqref{eq:10} are satisfied when we employ Lemma \ref{lemma2} and use $\lambda_i$ as our sufficient upper bound.
\end{proof}

As the situation with two distinct infection numbers on the initial infectives is proved, we can generalize it and get the following lemma.

\begin{lemma}
\label{lemma3.4}
    Suppose the set of initial infections, $A =\bigcup_{i=1}^n A_i$, where each node in $A_i$
initially has $m_i$ infections. A sufficeint condition for the process to die out is 
    \begin{equation}
    \displaystyle{ \lambda  \leq \frac{1}{ (1-\phi(1))(d-2) + (1-\phi(k+1)) -  k \phi(k+1) }},
\end{equation}
where $k$ is the minimum of $m_i$.
\end{lemma}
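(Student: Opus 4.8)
The plan is to redo, for $n$ blocks, the computation performed for two blocks in the proof preceding this lemma. Write $N_{A_p}=|\{\langle x,y\rangle:\ x\in A_p,\ y\notin A_p\}|$ for the number of edges of $T_d$ with exactly one endpoint in $A_p$, and for $p\ne q$ let $\hat N_{pq}=|\{\langle x,y\rangle:\ x\in A_p,\ y\in A_q,\ x\sim_0 y\}|$ be the number of edges joining $A_p$ and $A_q$. Expanding $g^A_\lambda(t)=E^A_\lambda(\nu_\rho(V^*_t))$ to first order as $t\downarrow 0$, each block $A_p$ --- all of whose vertices carry $m_p$ infections at $t=0$ --- contributes the same four families of terms as in Lemma \ref{lemma2}: healing; infecting the $N_{A_p}-\sum_{q\ne p}\hat N_{pq}$ genuinely external neighbours (factor $1-\phi(1)$, weight $\rho$); infecting a node of $A_p$ (factor $1-\phi(m_p+1)$); and killing a node of $A_p$ (factor $\phi(m_p+1)$, weight $\rho^{-m_p}$). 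In addition, for each unordered pair $\{p,q\}$ one has four cross terms --- ``$A_p$ infects $A_q$'', ``$A_q$ infects $A_p$'', ``$A_p$ kills a node of $A_q$'', ``$A_q$ kills a node of $A_p$'' --- in which the argument of $\phi$ and the power of $\rho$ are dictated by the block being acted on. This is the $n$-block analogue of \eqref{eq:3} and Table 3. As in Lemmas \ref{lem3.1}--\ref{lemma2}, the outline then reduces the problem to reading off, from $\frac{d}{dt}g^A_\lambda(t)\big|_{t=0}$, the necessary condition on $\lambda$ for die-out.

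I would differentiate at $t=0$, factor out $(1-\rho)\nu_\rho(A)$ as in \eqref{eq:5}, and substitute the tree estimate \eqref{ineq}, $N_{A_p}\ge(d+1)|A_p|-2(|A_p|-1)$, for every $p$. The cross terms are then handled exactly as in the two-block case. The infecting cross terms combine with the $-\hat N_{pq}$ corrections inherited from the $N_{A_p}$-terms into quantities $\lambda\,m_p\,\hat N_{pq}\,[\phi(m_q+1)-\phi(1)]$, non-negative since $\phi$ is non-decreasing, which accordingly appear with a minus sign on the controlling side of the rearranged inequality, exactly as in \eqref{eq:8}. The killing cross terms are positive in the expansion, but, using $\rho<1$, the monotonicity of $\phi$, and $m_p,m_q\le\mu:=\max_r m_r$, each is bounded above by a multiple of $\dfrac{\lambda\,\hat N_{pq}\,\mu\,\phi(\mu+1)(\rho^{\mu-1}+\dots+1)}{\rho^{\mu}}$, which also enters with a minus sign after the rearrangement and may be discarded, just as $\dfrac{2\lambda\hat N j\phi(j+1)(\rho^{j-1}+\dots+1)}{\rho^{j}}$ was in \eqref{eq:8}. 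After dropping all these non-positive $\hat N_{pq}$-contributions and splitting $|A|=\sum_p|A_p|$, the necessary condition for die-out decouples into one inequality per block, each of the form of \eqref{eq:9}--\eqref{eq:10} with $|A_p|$, $m_p$ and $\mu$ in place of $|B|$, $i$ and $j$; and each such inequality holds --- by Lemma \ref{lemma2} applied to that block, whose worst case is $\rho=1$ --- as soon as $\lambda\le\lambda_{m_p}:=\big((1-\phi(1))(d-2)+1-(m_p+1)\phi(m_p+1)\big)^{-1}$.

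It remains to identify the smallest threshold. No vertex ever carries more than $M-1$ infections, so $m_p\le M-1$ for every $p$; combined with the standing hypothesis $(1-\phi(1))(d-2)+1-M>0$ of Theorem \ref{upperb}, this makes every denominator $(1-\phi(1))(d-2)+1-(m_p+1)\phi(m_p+1)$ positive. Since $i\mapsto(i+1)\phi(i+1)$ is non-decreasing, $i\mapsto\lambda_i$ is non-decreasing as well, so $\min_p\lambda_{m_p}=\lambda_k$ with $k=\min_p m_p$; hence $\lambda\le\lambda_k$ forces all the block inequalities at once, which is exactly the claimed sufficient condition. I expect the genuine work to lie in the bookkeeping for the $\binom{n}{2}$ cross-block interaction terms: they must be enumerated correctly, the positive killing-cross terms must be bounded uniformly (through $\mu$) so that they become droppable non-positive contributions, and one must check that the decoupled per-block inequalities really remain of the type treated in Lemma \ref{lemma2}; the elementary estimates $\hat N_{pq}\le N_{A_p}$, $\sum_{q\ne p}\hat N_{pq}\le N_{A_p}$ and $\rho^{-m_q}\le\rho^{-\mu}$ are what make this accounting go through. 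Everything else --- the expansion, the differentiation, the decoupling, and the monotonicity of $\lambda_i$ --- is routine given the two-block lemma.
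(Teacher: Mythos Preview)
Your proposal is correct and is precisely the approach the paper has in mind: the paper gives no separate proof of Lemma~\ref{lemma3.4} at all, merely stating that ``as the situation with two distinct infection numbers on the initial infectives is proved, we can generalize it,'' and your sketch is exactly that generalization of the two-block argument carried out in detail. The bookkeeping you describe for the $\binom{n}{2}$ cross-block terms, the decoupling into per-block inequalities of the form \eqref{eq:9}--\eqref{eq:10}, and the identification of $\min_p\lambda_{m_p}=\lambda_k$ via the monotonicity of $i\mapsto(i+1)\phi(i+1)$ all faithfully extend the paper's own reasoning.
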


Now, one last thing we need to check is whether the statement $(g^A_\lambda(t))' \leq 0$ will still hold when some nodes are killed after a finite time. That is,
\begin{equation}
    E^A_\lambda \bigg(\frac{d}{dt}{g_\lambda^{ \hat{V_u^*}}}(t) \bigg) \bigg| _{t=0} \leq 0,
\end{equation}
where $g_\lambda^{\hat{V_u^*}}(t)$ is the function representing the time when some nodes in $T_d$ are already killed at time $t$, i.e  $\xi(x_i) = \emptyset$ for some $i$. In this scenario, the death of a node will lead to two changes on the graph: removal of the infections on this site and the edges connecting neighbors and itself. We already took care of the decrease in overall infection number when calculating  $g^A_\lambda(t)$, so the only influence brought by killing a node is changing the structure of the graph by removing edges. 

Whenever a node is removed from $G_t$, we can see that $T_d$ will be split into a finite combination of finite graphs and infinite trees. Since we already proved that for any finite graph the process will die out almost surely, we only need to look into 1 infinite tree without loss of generality. In this case, removal of edges will affect our reasoning by reducing the size of $N_A$. If $N_A =  |\{ \langle x, y\rangle : x \in A, y \notin A \}| \leq d |A| - 2(|A|-1)$, this will actually result in an upper bound which is strictly larger than the bound we use for $\lambda_1$, so it will not invalidate our reasoning. For example, if we look at the case in which there is one infection per site, then
\begin{equation}
\begin{split}
    \frac{d}{dt}{g^{A}_{\lambda}(t)} \bigg| _{t=0} =& (1-\rho) \nu_{\rho}(A) \bigg\{ \frac{|A|}{\rho} + (\frac{\lambda|A|}{\rho})\phi(2) - \lambda N_A (1-\phi(1)) \\ &- \lambda|A|(1-\phi(2)) \bigg \} \\ &\leq (1-\rho) \nu_{\rho}(A) \bigg\{ \frac{|A|}{\rho} + (\frac{\lambda|A|}{\rho})\phi(2)  - \lambda|A|(1-\phi(2)) \bigg \} \leq 0
\end{split}
\end{equation}
when 
\begin{equation}
      \rho\lambda \bigg \{ (1-\phi(2)) -\frac{\phi(2)}{\rho} \bigg \} \geq 1.
\end{equation}
If we want the process to die out, we need 
\begin{equation}
      \rho\lambda \bigg \{ (1-\phi(2)) -\frac{\phi(2)}{\rho}) \bigg \} \leq 1 \quad \forall \rho,
\end{equation}
so $\lambda \leq \frac{1}{1-2\phi(2)}$, and we can tell that this bound is strictly larger than the upper bound we use for $\lambda_1$, and thus it erases our concern.  

Together with Lemma \ref{lemma3.4}, we complete the proof of Theorem \ref{upperb}.

\begin{remark}
    It is clear that the method will not work on any infinite graphs which are not regular, as in this case we cannot determine the number of neighbors for each node beforehand. Thus we need new methods to deal with graphs like infinite Erd\H{o}s-R\'enyi random graphs or Preferential Attachment graphs.
\end{remark}
\maketitle
\subsubsection{Proof of Theorem \ref{thm:3}}
We also give a short outline for proving Theorem \ref{thm:3}. We look into the number of infections that the whole graph is carrying throughout the time, and we use an integer-valued jump process to represent it. The jump process will have an absorption site at $0$, representing the death of the process. It will also have some positive probability of increasing and decreasing, representing the change of overall number of infection throughout the time. Now, if we find the biggest value of $\lambda$ which, by changing the probability of jumping accordingly, will lead the jump process to its absorption state almost surely, then we need $\lambda$ to be larger than this value in order for the contact process to survive.

\begin{proof}
Let $(N_t)_{t \geq 0}$ be the integer-valued process representing the number of infections on $V_t$, i.e. $(N_t) = |(\xi_t)| \hspace{0.2cm} \forall t$. 

$N_t$ increases by 1 whenever the virus passes to healthy node or to the sites that are already infected. By the setup of our contact process, a virus is successfully passed to the healthy nodes with rate $\lambda (1-\phi(1))$, and is passed to the nodes that are already infected with rate $\lambda (1-\phi(i+1))$,  where $i$ represents the number of infections the recipient is currently carrying. Since $\phi(x)$ is monotonically increasing with respect to $x$, we have the rate of infection to be smaller than $\lambda (1-\phi(1))$.

Also, $N_t$ decreases by 1 whenever the virus is healed, and when the virus is killing the host by passing to the sites that are already infected, $N_t$ decreases by the number of infection on the host. By the same argument that $\phi(x)$ is monotonically increasing with respect to $x$, we have the rate of decreasing is strictly larger than 1+ $\lambda \phi(2)$.

Now, we couple the $(N_t)_{t \geq 0}$ with a 1 dimension continuous-time random walk $(W_t)_{t \geq > 0}$ on $\mathbb{N}$ including 0, with absorbing site on 0. Note that when the walk achieves the absorbing site, it means that there is no virus on the graph anymore, so the contact process dies out. In this case, $(N_t)$ is stochastically dominated by $(W_t)$ if the probability of increasing by 1 for $(W_t)$ is 

\begin{equation}
  \displaystyle p_{W} =  \frac{\lambda (1-\phi(1))}{1+ \lambda (1-\phi(1)) + \lambda \phi(2)}. 
\end{equation}

Since $(W_t)$ is a 1 dimensional random walk, we have

\begin{equation}
  \displaystyle \lim_{t \to \infty} P(W_t = 0) = 1 \quad \text{ if } \quad p_W \leq \frac{1}{2},
\end{equation}
which is equivalently to say that

\begin{equation*}
  \displaystyle\frac{\lambda (1-\phi(1))}{1+ \lambda (1-\phi(1)) + \lambda \phi(2)}  \leq \frac{1}{2} 
\end{equation*}
\begin{equation}
\label{eq:last}
  \displaystyle \lambda \leq \frac{1}{1-\phi(1)-\phi(2)}.
\end{equation}

Any $\lambda$ smaller than the value in \eqref{eq:last} will lead to the death of the process almost surely, as the number of infection in the graph will converge to 0. Therefore, infection rate needs to be larger than the bound in \eqref{eq:last} in order for the contact process to survive, which finishes the proof of Theorem \ref{thm:3}.

\end{proof}

\section*{Acknowledgement}
The author would like to thank Professor Carl Mueller for his invaluable guidance and support with patience throughout the time.

\printbibliography
\end{document}